\newcommand{\overbar}[1]{\mkern 1.5mu\overline{\mkern-1.5mu#1\mkern-1.5mu}\mkern 1.5mu}
\newtheorem{thm}{Theorem}[section] 
\newtheorem{prop}[thm]{Proposition}
 \theoremstyle{definition} 
 \theoremstyle{definition}
\begin{document}
\baselineskip=17pt

\title{Properties of cuspidal divisor class numbers of non-split Cartan modular curves}
 
\author{Pierfrancesco Carlucci\\
Dipartimento di Matematica\\
 Universit\'a degli studi di Roma Tor Vergata\\
  Via della Ricerca Scientifica 1, 00133, Rome, Italy\\ 
E-mail: pieffecar@libero.it}
\date{30 May, 2016}
 
\maketitle

%% Classification and key words; note that the 2010 classification is used: 

\renewcommand{\thefootnote}{}

\footnote{2010 \emph{Mathematics Subject Classification}: Primary  11B68; Secondary  11M41, 13C20.}

\footnote{\emph{Key words and phrases}: Cuspidal Divisor Class Number, Non-Split Cartan Curves, Generalized Bernoulli Numbers, L-functions, Regular and Irregular Primes.}

\renewcommand{\thefootnote}{\arabic{footnote}}
\setcounter{footnote}{0}
%%%%%%%%
\begin{abstract}
Let $ \mathfrak{C}^+_{ns}(p) $ be the Cuspidal Divisor Class Group of the modular curves $X^+_{ns}(p) $ associated to the normalizer of a non-split Cartan subgroup of level $ p$. I study the $ p-$primary part of $ \mathfrak{C}^+_{ns}(p) $ and estimate the order of growth of $  |\mathfrak{C}^+_{ns}(p)| $. 
\end{abstract}

\section{Introduction} 

Let $ p $ be a prime and let $ X^+_{ns}(p^k) $ be the modular curve associated to the normalizer of a non-split Cartan subgroup of level $p^k$. In \cite{Io} we describe the Cuspidal Divisor Class Group $  \mathfrak{C}^+_{ns}(p^k) $  on $X^+_{ns}(p^k)$ as a module over the group ring $ \mathbb{Z}[(\mathbb{Z}/p^k\mathbb{Z})^*/\{\pm 1\}] $. Let $ w $ be a generator of $ H=(\mathbb{Z}/p\mathbb{Z})^*/\{\pm 1\} $ and let $ \omega $ be a generator of the character group $ \hat{\mathbb{F}_{p^2}^*} $. Define $ d=\displaystyle\frac{12}{\gcd(12,p+1)} $, the group ring $ R= \mathbb{Z}[H] $, the ideals: $$ R_0 := \Big{\{} \sum b_jw^j \in R \mbox{ such that } \deg \left( \sum b_jw^j\right)=\sum b_j=0 \Big{\}}, $$
$$  R_d := \Big{\{} \sum b_jw^j \in R \mbox{ such that } d \mbox{ divides } \deg\left(\sum b_jw^j\right)=\sum b_j \Big{\}}, $$
the Stickelberger element:
$$ \theta =\displaystyle\frac{p}{2} \sum_{i=0}^{\frac{p-3}{2}}\sum_{ { \begin{scriptsize} \begin{array}{c} x \in \mathbb{F}^*_{p^2}/\{\pm 1\}  \\  \pm x^{p+1} =w^i \end{array} \end{scriptsize} }} B_2 \left( \left\langle \frac{\frac{1}{2}(\mbox{Tr}(x)) }{p} \right\rangle \right) w^{-i} \in \mathbb{Q}[H]  $$
and the generalized Bernoulli number:
$$ B_{2,\chi} =  \sum_{x \in \mathbb{F}_{p^2}^* / \{\pm 1\}} B_2 \left( \left\langle \frac{\frac{1}{2}\mbox{Tr}(x)}{p} \right\rangle \right) \chi(x). $$
Specializing \cite[Theorem 7.1]{Io} and \cite[Theorem 7.4]{Io} to the case $k=1$ we obtain:

\begin{thm}\label{robamia} The Cuspidal Divisor Class Group on $ X^+_{ns}(p) $ is a module over $R$ and we have the following isomorphism: 
$$ \mathfrak{C}^+_{ns}(p) \cong R_0 / R_d \theta. $$
Moreover we have:
$$ |\mathfrak{C}^+_{ns}(p)| = \displaystyle \frac{24}{(p-1)\gcd(12,p+1)}\prod_{j=1}^{\frac{p-3}{2}}\frac{p}{2}B_{2,\omega^{(2p+2)j}}.
 $$
\end{thm}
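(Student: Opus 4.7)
The first assertion, $\mathfrak{C}^+_{ns}(p) \cong R_0/R_d\theta$, is a direct specialization of \cite[Theorem 7.1]{Io} at $k=1$: the group ring, Stickelberger element, and two ideals introduced there collapse to the $R$, $\theta$, $R_0$, and $R_d$ of the current statement, so no further argument is needed beyond matching notation.

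For the order formula, the plan is to compute $|R_0/R_d\theta|$ via the character decomposition of $\overline{\mathbb{Q}}[H] \cong \bigoplus_{\chi \in \hat H}\overline{\mathbb{Q}}\, e_\chi$, on which multiplication by $\theta$ acts as the scalar $\chi(\theta)$ on the $\chi$-component. First I would verify $\mathrm{aug}(\theta) = 0$, which ensures that $\theta$ annihilates the trivial-character line and that $R_d\theta$ actually sits inside $R_0$; this reduces to the identity $\sum_{k=0}^{p-1} B_2(k/p) = B_2/p$ applied to the sum over $\mathrm{Tr}(x)/2 \in \mathbb{F}_p$, weighted by the fibre sizes of $\mathrm{Tr}$.

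Next I would evaluate $\chi(\theta)$ for each nontrivial $\chi \in \hat H$. Rewriting $w^{-i}$ as $(\pm x^{p+1})^{-1}$ inside the defining double sum and interchanging summation yields
\[
\chi(\theta) = \frac{p}{2}\sum_{x \in \mathbb{F}_{p^2}^*/\{\pm 1\}} B_2\!\left(\left\langle \tfrac{\mathrm{Tr}(x)/2}{p}\right\rangle\right)\tilde{\chi}(x)^{-1} = \frac{p}{2}\, B_{2,\tilde{\chi}^{-1}},
\]
where $\tilde{\chi}$ denotes the pullback of $\chi$ to $\mathbb{F}_{p^2}^*/\{\pm 1\}$ through $x \mapsto \pm x^{p+1}$. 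Since $\omega^{2(p+1)}$ has order $|H| = (p-1)/2$ in $\hat{\mathbb{F}_{p^2}^*}$, the lifted characters of $H$ are exactly $\omega^{(2p+2)j}$ for $j = 0, \ldots, (p-3)/2$; combined with the involution $\chi \leftrightarrow \chi^{-1}$, which permutes the nontrivial characters and is compatible with $B_2(\langle t/p\rangle) = B_2(\langle -t/p\rangle)$, this gives $\prod_{\chi \neq 1}\chi(\theta) = \prod_{j=1}^{(p-3)/2}\frac{p}{2}B_{2,\omega^{(2p+2)j}}$.

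To extract the order from the product of character values, I would decompose $R_d = R_0 \oplus \mathbb{Z}\cdot e$ for any $e \in R$ with $\mathrm{aug}(e) = d$, and compute covolumes along the Fourier transform $\mathbb{Q}[H] \to \prod_\chi \mathbb{Q}(\chi)$, obtaining
\[
|R_0/R_d\theta| = \frac{d}{|H|}\prod_{\chi \neq 1}\chi(\theta) = \frac{24}{(p-1)\gcd(12,p+1)}\prod_{j=1}^{(p-3)/2}\frac{p}{2}\,B_{2,\omega^{(2p+2)j}},
\]
using $d/|H| = 24/((p-1)\gcd(12,p+1))$. The main obstacle is precisely this covolume step: the integral lattice $R$ is only the saturation of $\bigoplus_\chi \mathcal{O}_\chi$ inside $\prod_\chi \mathbb{Q}(\chi)$, not their direct sum, so the rational factor $d/|H|$ has to be extracted from a careful discriminant/index comparison between $R$, $R_d$, and the image of $R_d$ under the Fourier transform -- a calculation which should already be embedded in \cite[Theorem 7.4]{Io}.
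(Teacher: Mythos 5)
Your proposal is correct and matches the paper's treatment: the paper gives no proof of this theorem beyond asserting that it is \cite[Theorem 7.1]{Io} and \cite[Theorem 7.4]{Io} specialized to $k=1$, which is precisely the justification you invoke for the isomorphism and for the decisive lattice-index step. Your reconstruction of the order formula (vanishing of $\mathrm{aug}(\theta)$ via the distribution relation for $B_2$, the identification $\chi(\theta)=\frac{p}{2}B_{2,\tilde{\chi}^{-1}}$ with the lifted characters being exactly the $\omega^{(2p+2)j}$, and the index $\frac{d}{|H|}\prod_{\chi\neq 1}\chi(\theta)$, which is valid since $d$ divides $|H|=\frac{p-1}{2}$) is a sound sketch of what the cited theorems must contain.
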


From the previous theorem we deduce two results both having a counterpart in cyclotomic field theory. \\

\noindent \textbf{Theorem \ref{Ordinedigrandezza}} \textit{We have}:  $$ \ln |\mathfrak{C}^+_{ns}(p)| = p\ln p + \Theta(p).$$

The paper ends up with a modular analogue of Mazur-Wiles  \cite[pag. 300]{Washington}, Herbrand \cite[pag. 101]{Washington} and Ribet \cite[pag. 342]{Washington} theorems for cyclotomic fields. We have a similar piece by piece description of the $ p$-Sylow part $  \mathcal{C}_p $ of $ \mathfrak{C}^+_{ns}(p) $. Let $A$ be the $p$-Sylow subgroup of the ideal class group of $ \mathbb{Q}(\zeta_p) $. A corollary of Mazur-Wiles theorem states that:
$$ |A(\omega^i)| = p\mbox{-part of } B_{1,\omega^{-i}} \mbox{ (}i\not=1\mbox{ mod }p-1, i \mbox{ odd}).  $$ 
A reformulation of Kubert and Lang Theorems 4.2 and 4.3 of \cite[Chapter 5]{KL}, enables us to deduce that, as in the cycltomic field theory, $ |\mathcal{C}_p(\omega^{2j})| $ are strictly related to the $p$-parts of certain generalized Bernoulli numbers $ B_{2,\omega^{4j}} $.
 Usually we expect that $ \mathcal{C}_p \cong (\mathbb{Z}/p\mathbb{Z})^{[\frac{p}{4}]-1} $ and exceptions occur only when $p$ is an irregular prime. More precisely: 
 \\    
 \\ \textbf{Theorem \ref{analogomodulare}} 
 \textit{ $ \mbox{ord}_p(|\mathfrak{C}^+_{ns}(p)|)= [\frac{p}{4}]-1 $ if and only if $ p $ is a regular prime or $ p \equiv 1 \mbox{ mod }4 $ is irregular and $ p $ does not divide the numerator of any Bernoulli number $ b_{4j+2} $ for $ j \le \frac{p-5}{4}$. \\   
If  $ \mbox{ord}_p(|\mathfrak{C}^+_{ns}(p)|)= [\frac{p}{4}]-1 $ we have $ \mathcal{C}_p \cong (\mathbb{Z}/p\mathbb{Z})^{[\frac{p}{4}]-1} $.\\
 If $ p \equiv 1 \mbox{ mod } 4 $ and  $ \mbox{ord}_p(|\mathfrak{C}^+_{ns}(p)|)> [\frac{p}{4}]-1 $ then $ \mathfrak{C}^+_{ns}(p) $ contains an element of order $ p^2 $ .\\
 If $ p \equiv 3 \mbox{ mod } 4 $ and  $ \mbox{ord}_p(|\mathfrak{C}^+_{ns}(p)|)> [\frac{p}{4}]-1 $ then $ \mathfrak{C}^+_{ns}(p)$ contains  an element of order $ p^2 $ if and only if $ p $ divides $ b_{4j+2} $ for some $ j \le \frac{p-7}{4} $.\\  
 If $ p \equiv 3 \mbox{ mod } 4 $ then $  \mbox{ord}_p(|\mathfrak{C}^+_{ns}(p)|) \ge [\frac{p}{4}]-1 + \mbox{irr}(p) $ where $ \mbox{irr}(p) $ is the index of irregularity of $ p $. }

\section{Order of growth of Cuspidal Divisor Class Groups}
 
We study the order of growth of $|\mathfrak{C}^+_{ns}(p)|  $:  

\begin{thm}\label{Ordinedigrandezza}
If $ p\equiv 1 $ mod $ 4 $:
$$ |\mathfrak{C}^+_{ns}(p)| = \mathcal{O} \left( \left( \frac{p}{2\sqrt{6}} \right)^{p-4}  \right). $$
If $ p \equiv 3 $ mod $ 4 $:
$$ |\mathfrak{C}^+_{ns}(p)| = \mathcal{O} \left(\left( \frac{p}{2\sqrt[4]{90}} \right)^{p-4}\right). $$
Furthermore for every $ p $ we have: 
$$  |\mathfrak{C}^+_{ns}(p)| = \Omega \left( \left(\frac{p}{2\pi}\right)^{p-4} \right) $$  
so
$$ \ln |\mathfrak{C}^+_{ns}(p)| - p\ln p = \Theta(p).  $$
\end{thm}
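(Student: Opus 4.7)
The plan is to bound the product of generalized Bernoulli numbers appearing in Theorem \ref{robamia} by reducing each $B_{2,\chi_j}$ to a standard Dirichlet $L$-value at $s=2$ via the functional equation. Since $\chi_j=\omega^{2(p+1)j}$ is fixed by the Frobenius $x\mapsto x^p$, it factors through the norm $N\colon\mathbb{F}_{p^2}^*\twoheadrightarrow\mathbb{F}_p^*$ as $\chi_j=\psi_j\circ N$ for an even Dirichlet character $\psi_j$ mod $p$. Parametrising $\mathbb{F}_{p^2}=\mathbb{F}_p[\sqrt{D}]$ with $D$ a non-square, so that $\tfrac12\mathrm{Tr}(a+c\sqrt{D})=a$ and $N(a+c\sqrt{D})=a^2-Dc^2$, the defining sum splits as
\[
B_{2,\chi_j}=\tfrac12\sum_{a\in\mathbb{F}_p}B_2\!\left(\left\langle\tfrac{a}{p}\right\rangle\right)\sum_{c\in\mathbb{F}_p}\psi_j(a^2-Dc^2).
\]
For $a\ne 0$ the inner sum equals $\psi_j(a)^2 J_j$, where $J_j=\sum_u\psi_j(1-Du^2)$ is a Jacobi-type sum of modulus $\le\sqrt p$ by Weil-type bounds; the $a=0$ contribution vanishes unless $\psi_j^2$ is trivial, which occurs only at the single exceptional index $j=(p-1)/4$ when $p\equiv 1\pmod{4}$ (a direct computation shows this exceptional factor still has size $O(p^2)$). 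Away from that index one obtains $B_{2,\chi_j}=\frac{J_j}{2p}\,B_{2,\psi_j^2}$, where $B_{2,\psi_j^2}$ is the standard generalized Bernoulli number of the primitive even character $\psi_j^2$ of conductor $p$.

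Next, apply the functional equation for primitive even Dirichlet $L$-functions of conductor $p$: $|B_{2,\psi^2}|=\tfrac{p^{3/2}}{\pi^2}\,|L(2,\overline{\psi^2})|$. Combined with $|J_j|\le\sqrt p$ this yields $|\tfrac{p}{2}B_{2,\chi_j}|\le\tfrac{p^2}{4\pi^2}|L(2,\overline{\psi_j^2})|$, hence
\[
\prod_{j=1}^{(p-3)/2}\Bigl|\tfrac{p}{2}B_{2,\chi_j}\Bigr|\le\Bigl(\tfrac{p}{2\pi}\Bigr)^{p-3}\prod_{j}|L(2,\overline{\psi_j^2})|.
\]
The termwise estimate $|L(2,\chi)|\le\zeta(2)=\pi^2/6$ already produces $(p/(2\sqrt 6))^{p-3}$, which after absorbing the $O(1/p)$ prefactor $24/((p-1)\gcd(12,p+1))$ gives the upper bound $O((p/(2\sqrt 6))^{p-4})$; this suffices for $p\equiv 1\pmod{4}$, where the characters $\psi_j^2$ (of order dividing $(p-1)/4$) occur twice in the product and the mean-value refinement below gains less. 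For $p\equiv 3\pmod{4}$ the $(p-3)/2$ characters $\psi_j^2$ are distinct non-trivial even characters mod $p$, so orthogonality gives the mean-square identity $\tfrac{2}{p-3}\sum_{j}|L(2,\overline{\psi_j^2})|^2=\zeta(4)+o(1)$; applying AM--GM yields $\prod_j|L(2,\overline{\psi_j^2})|\le\zeta(4)^{(p-3)/4}$, and since $\zeta(4)^{1/4}=\pi/\sqrt[4]{90}$ this refines the bound to $O((p/(2\sqrt[4]{90}))^{p-4})$.

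For the lower bound, factor the Euler product: $\prod_{\chi\text{ even mod }p}L(2,\chi)=\zeta_{\mathbb{Q}(\zeta_p)^+}(2)\ge 1$ forces $\prod_{\chi\ne 1}|L(2,\chi)|\ge 1/\zeta(2)=6/\pi^2$, and combining with the factor $(p/(2\pi))^{p-3}$ obtained above yields $|\mathfrak{C}^+_{ns}(p)|=\Omega((p/(2\pi))^{p-4})$; the asymptotic $\ln|\mathfrak{C}^+_{ns}(p)|-p\ln p=\Theta(p)$ then follows at once. The main technical difficulty is the character-sum reduction in the first paragraph --- the Weil-type estimate on the Jacobi sum $J_j$ and the separate handling of the exceptional index $j=(p-1)/4$ when $p\equiv 1\pmod{4}$ --- together with the bookkeeping needed to extract the sharp constants $\sqrt 6$, $\sqrt[4]{90}$ and $\pi$; the functional equation and the $L^2$-mean identity via orthogonality of characters mod $p$ are standard and only contribute routine computation.
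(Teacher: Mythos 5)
Your proposal is correct and follows the same overall architecture as the paper's proof --- reduce $\prod_j \tfrac{p}{2}B_{2,\omega^{(2p+2)j}}$ to $(p/2\pi)^{p-3}\prod_j|L(2,\cdot)|$ over characters mod $p$, then bound the $L$-product termwise by $\zeta(2)$ for $p\equiv 1\pmod 4$, by AM--GM plus an $L^2$ mean value for $p\equiv 3\pmod 4$, and from below by a Dedekind zeta value --- but your reduction step is genuinely different. The paper quotes the Kubert--Lang functional equation for $L(s,\chi,T)$ over $\mathbb{F}_{p^2}$ (\cite[Chapter 1, Theorem 5.2]{KL}), whose Gauss sum $\tau(\chi,T)$ has modulus $p$; you instead factor $\chi_j=\psi_j\circ N$ through the norm, split the Bernoulli sum over $\mathbb{F}_{p^2}$ along the trace fibres to obtain a Jacobi-type sum $J_j$ of modulus $\sqrt p$ times the classical $B_{2,\psi_j^2}$ of conductor $p$, and then apply the ordinary functional equation. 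The two are equivalent (by Hasse--Davenport, $\tau(\psi\circ N,T)$ factors as that Jacobi sum times the $\mathbb{F}_p$ Gauss sum, and $\sqrt p\cdot\sqrt p=p$), and your version is more self-contained; what it costs you is uniformity at the exceptional index, which the paper's functional equation handles automatically.

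Two loose ends should be tightened. First, for the claimed $\Omega\bigl((p/2\pi)^{p-4}\bigr)$ when $p\equiv 1\pmod 4$ you also need a \emph{lower} bound $\Omega(p^2)$ on the exceptional factor $\tfrac p2 B_{2,\chi_{(p-1)/4}}$, not merely $O(p^2)$: otherwise you lose a factor $p^2$ against the stated bound. This does hold, since the $a=0$ contribution $\tfrac12 B_2(0)\psi(-D)(p-1)$ dominates the $O(1)$ remainder, but you only assert the upper bound. Relatedly, your Dedekind-zeta lower bound is phrased for the full set of nontrivial even characters, which is the $p\equiv 3$ configuration; for $p\equiv 1$ the product is $\zeta(2)\prod_{\chi\in B,\,\chi\ne 1}L(2,\chi)^2$ and the same argument must be applied to the subgroup $B$. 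Second, the mean-square identity $\tfrac{2}{p-3}\sum_j|L(2,\overline{\psi_j^2})|^2=\zeta(4)+o(1)$ is not pure orthogonality: the Dirichlet series must first be truncated at $n<p/2$ (so that $n\equiv\pm m\pmod p$ forces $n=m$) and the tail controlled, e.g.\ by P\'olya--Vinogradov, exactly as the paper does, and the resulting $O(1/p)$ error must be checked to survive the $(p-3)/4$-th power. Both repairs are routine.
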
  
 
\begin{proof}
Let $ T: \mathbb{F}_{p^2} \rightarrow \mathbb{F}_p $ a surjective $ \mathbb{F}_p$-linear map. Let $ \chi $ be a multiplicative character on $ \mathbb{F}_{p^2}^*$. Following \cite[Paragraph 1.5]{KL} and \cite{lfunctions} we define for $ s \in \mathbb{C} $ the generalized $L $-series: 
$$ L(s,\chi,T)= p^{-s-1} \sum_{\alpha \in \mathbb{F}_{p^2}} \chi(\alpha) \zeta \left( s,  \left\langle \frac{T(\alpha)}{p} \right\rangle \right) $$
where $ \zeta(s,u) $ is the Hurwitz zeta function which is defined for a real number $ 0 < u \le 1 $ by:
$$ \zeta(s,u)= \sum_{n=0}^{\infty}\displaystyle\frac{1}{(n+u)^s}.  $$

\noindent 
By a classical result of Hurwitz \cite[Theorem 4.2]{Washington} we have: 
$$ \zeta(1-m,u) = -\displaystyle \frac{1}{m} B_m(u) $$
\noindent
so if $ \chi $ is an even character and $ T(\alpha):= \frac{1}{2}\mbox{Tr}(\alpha) \mbox{ mod }p $,  we have:
$$ B_{2,\chi} = -L(-1,\chi,T).  $$
Consider the following relation (cfr. \cite[Theorem 5.2, Chapter 1]{KL}): 
$$ L(s,\chi,T)= \frac{1}{2  \pi  p i} \left(\frac{2\pi}{p}\right)^s \Gamma(1-s)\tau(\chi,T)[e^{\frac{\pi i s}{2}}-\chi(-1)e^{-\frac{\pi i s}{2}}]L(1-s,\chi_{|_{\mathbb{F}_p}}) $$
where $ L(1-s,\chi_{|_{\mathbb{F}_p}}) $ is an ordinary $ L$-function and  
$$ \tau(\chi,T) = \sum_{\alpha \in \mathbb{F}_{p^2}} \chi(\alpha)e^\frac{2\pi i T(\alpha)}{p} $$ is a Gauss sum on $ \mathbb{F}_{p^2} $. 
From \cite[Proposition 11.5]{AnalyticIK} we have $|\tau(\chi,T)|=p $ so by virtue of Theorem \ref{robamia} we deduce:
$$    |\mathfrak{C}^+_{ns}(p)| = \Theta \left(  \left(\frac{p}{2\pi}\right)^{p-4} \prod_{j=1}^{\frac{p-3}{2}}|L(2,(\omega^{(2p+2)j})_{|_{\mathbb{F}_p}})| \right). $$   
If $ p\equiv 1 \mbox{ mod }4 $ let $ B $ the subgroup of squares of even characters mod $ p $.\\ 
In this case we have: 
$$\bigg{|} \displaystyle\prod_{j=1}^{\frac{p-3}{2}}L(2,(\omega^{(2p+2)j})_{|_{\mathbb{F}_p}})\bigg{|} = \bigg{|}\displaystyle\prod_{\chi\not=1 \mbox{  {\small even}}}  L(2,\chi^2)\bigg{|} = $$  $$ = \bigg{|}\zeta(2) \displaystyle\prod_{\chi\not=1 \mbox{, } \chi \in B }  L(2,\chi)^2\bigg{|} \le \zeta(2)^{\frac{p-3}{2}}, $$
$$ |\mathfrak{C}^+_{ns}(p)| = \mathcal{O} \left( \left( \frac{p}{2\sqrt{6}} \right)^{p-4} \right). $$
\noindent
If $ p\equiv 3 \mbox{ mod }4 $ we can obtain a more accurate estimation. In this case we have:  
$$ \displaystyle\prod_{j=1}^{\frac{p-3}{2}}L(2,(\omega^{(2p+2)j})_{|_{\mathbb{F}_p}}) = \displaystyle\prod_{\chi\not=1 \mbox{  {\small even}}}  L(2,\chi) $$ and by the arithmetic-geometric mean inequality:
$$ \displaystyle\bigg{|}\prod_{\chi\not=1 \mbox{  {\small even}}}  L(2,\chi)^2\bigg{|}^{\frac{2}{p-3}} \le \displaystyle\frac{2}{p-3} \sum_{\chi\not=1 \mbox{ {\small even}}}|L(2,\chi)|^2.  $$
For $ t\ge \frac{p+1}{2} $ let $ S(t,\chi)= \sum_{\frac{p+1}{2}\le n < t} \chi(n) $, then:
$$ L(2,\chi)= \sum_{n=1}^{\frac{p-1}{2}} \displaystyle\frac{\chi(n)}{n^2} + 2 \int_{\frac{p+1}{2}}^{\infty} \frac{ S(t,\chi)}{t^3} dt .$$
From the Polya-Vinogradov inequality \cite[Theorem 12.5]{AnalyticIK} for every $ \chi\not=1 $ we have $ |S(t,\chi)|\le 6\sqrt{p} \ln p $ and consequently: 
$$ |L(2,\chi)| \le \bigg{|}\sum_{n=1}^{\frac{p-1}{2}}  \displaystyle\frac{\chi(n)}{n^2} \bigg{|} + \frac{24 \sqrt{p} \ln p}{(p+1)^2} . $$  
By the triangle inequality we obtain:
$$  \displaystyle\bigg{(}\sum_{\chi\not=1 \mbox{ {\small even}} } |L(2,\chi)|^2\bigg{)}^{\frac{1}{2}} \le \bigg{(} \sum_{\chi \not=1 \mbox{ {\small even}} }  \bigg{|}\sum_{n=1}^{\frac{p-1}{2}}  \displaystyle\frac{\chi(n)}{n^2} \bigg{|}^2 \bigg{)}^{\frac{1}{2}} + \bigg{(} \sum_{\chi \not=1 \mbox{ {\small even}}} \left(  \frac{24 \sqrt{p} \ln p}{(p+1)^2} \right)^2  \bigg{)}^{\frac{1}{2}} 
 $$
$$ \le  \bigg{(} \sum_{\chi {\small\mbox{ even }}}  \bigg{|}\sum_{n=1}^{\frac{p-1}{2}}  \displaystyle\frac{\chi(n)}{n^2} \bigg{|}^2 \bigg{)}^{\frac{1}{2}} +  \frac{24 \sqrt{p} \ln p}{(p+1)^2} \sqrt{\frac{p-3}{2}} $$
$$ \le \pi^2 \sqrt{\frac{p-1}{180}} + \frac{24 \sqrt{p} \ln p}{(p+1)^2} \sqrt{\frac{p-3}{2}}  $$
because:
$$  \sum_{\chi {\small\mbox{ even }}}  \bigg{|}\sum_{n=1}^{\frac{p-1}{2}}  \displaystyle\frac{\chi(n)}{n^2} \bigg{|}^2 =  \sum_{\chi {\small\mbox{ even }}}\sum_{n=1}^{\frac{p-1}{2}}\sum_{m=1}^{\frac{p-1}{2}}\frac{\chi(n)\overbar{\chi}(m)}{n^2 m^2} = \frac{p-1}{2} \sum_{n=1}^{\frac{p-1}{2}} \frac{1}{n^4} \le \frac{p-1}{2} \zeta(4)  $$
and $  \sum_{\chi {\small\mbox{ even }}}\chi(n)\overbar{\chi}(m) =0 $ except when $ n\equiv \pm m \mbox{ mod } p $. 
Hence:
$$ \displaystyle\bigg{|}\prod_{\chi\not=1 \mbox{  {\small even}}}  L(2,\chi)^2\bigg{|}^{\frac{2}{p-3}} \le \frac{2}{p-3} \left( \pi^2 \sqrt{\frac{p-1}{180}} + \frac{24 \sqrt{p} \ln p}{(p+1)^2} \sqrt{\frac{p-3}{2}}\right)^2 , $$
$$ \frac{2}{p-3} \left( \pi^2 \sqrt{\frac{p-1}{180}} + \frac{24 \sqrt{p} \ln p}{(p+1)^2} \sqrt{\frac{p-3}{2}}\right)^2   = \frac{\pi^4}{90} + \mathcal{O}\left(\frac{1}{p} \right), $$ 
$$ \displaystyle\bigg{|}\prod_{\chi\not=1 \mbox{  {\small even}}}  L(2,\chi)\bigg{|} = \mathcal{O} \left( \left(\displaystyle\frac{\pi}{  \sqrt[4]{90}}\right)^{p} \right),  $$ 
$$ |\mathfrak{C}^+_{ns}(p)| = \mathcal{O} \left(\left( \frac{p}{2\sqrt[4]{90}} \right)^{p-4}\right). $$
   \noindent 
Let $ \Lambda $ be the von Mangoldt function:
$$\Lambda(n) = \begin{cases} \log p & \text{if }n=p^k \text{ for some prime } p \text{ and integer } k \ge 1, \\ 0 & \text{otherwise.} \end{cases}  $$
From the classical relation:
$$ \ln L(s,\chi) = \sum_{n=2}^{\infty}\frac{\Lambda(n)}{\ln n}\chi(n)n^{-s}$$ 
we have that if $ p \equiv 1 \mbox{ mod }4$:
$$ \prod_{\chi \in B}  L(2,\chi) = \mbox{exp}\left( \sum_{\chi \in B}\sum_{n=2}^{\infty}\frac{\Lambda(n)}{n^2\ln n}\chi(n) \right)= $$  $$ =\mbox{exp}\left( \frac{p-1}{4}\sum_{ \begin{scriptsize} \begin{array}{c} n=2 \\ n^4 \equiv 1 \mbox{ mod }p \end{array} \end{scriptsize} } ^{\infty}\frac{\Lambda(n)}{n^2\ln n} \right) \ge 1 $$
\noindent
and analogously if $ p \equiv 3 \mbox{ mod }4 $ we have: 
$ \prod_{\chi \mbox{  {\small even}}}  L(2,\chi) \ge 1 .$\\
Alternatively, we could notice that if $X$ is a group of Dirichlet characters and $K$ is the associated field with ring of integers $ \mathcal{O}_K $, from \cite[Theorem 4.3]{Washington} we have:
$$ \prod_{\chi \in X}L(2,\chi)= \zeta_K (2) = 1 + \sum_{I \subsetneq  \mathcal{O}_K} \frac{1}{[ \mathcal{O}_K: I]^2} > 1.$$
So we can easily deduce: 
$$  |\mathfrak{C}^+_{ns}(p)| = \Omega \left( \left(\frac{p}{2\pi}\right)^{p-4} \right) $$    
and
$$ \ln |\mathfrak{C}^+_{ns}(p)| - p\ln p = \Theta (p).  $$
  \end{proof}

\section{Eigencomponents at prime level}

Following \cite{pprimarypart}, in order to study the $ p-$primary part $ \mathcal{C}_p $ of $ \mathfrak{C}^+_{ns}(p) $ it is convenient to define:
$$ R_p := \mathbb{Z}_p[H] \mbox{ with } H=(\mathbb{Z}/p\mathbb{Z})^*/(\pm 1),$$
$$ R_{p,0}:= \{x \in R_p \mbox{ of degree } 0\}, $$
where the degree of $ x=\sum_{h \in H}x_h h $ is defined by $ \deg x=\sum_{h \in H}x_h $.
Of course we have $ \mathcal{C}_p \cong R_{p,0}/R_p \theta $ because when $ p \ge 5 $, the Stickelberger element $ \theta $ belongs to $ \frac{1}{12}\mathbb{Z}[H] $ and 12 is invertible in $\mathbb{Z}_p $. We have the following decomposition:
 $$ \mathbb{Z}_p \otimes \mathcal{C}_p = \mathop{\bigoplus_{\chi} } \mathcal{C}_p(\chi), $$
\noindent
where $ \chi$ ranges over the non trivial characters:
 $$ \chi: (\mathbb{Z}/p\mathbb{Z})^*/(\pm 1) \rightarrow \mathbb{Z}^*_p $$
\noindent
and $ a \in \mathcal{C}_p(\chi) $ if and only if $ a \cdot b = \chi(b) \cdot a $ for every $ b \in \mathbb{Z}_p \otimes \mathcal{C}_p $. \\
Let $ w $ be a generator of $ H=(\mathbb{Z}/p\mathbb{Z})^*/\{\pm 1\} $. Notice that:  
\noindent
$$ \chi(\theta) =\displaystyle\frac{p}{2} \sum_{i=0}^{\frac{p-3}{2}}\sum_{ { \begin{scriptsize} \begin{array}{c} x \in (\mathbb{F}^*_{p^2}/\{\pm 1\})  \\  \pm x^{p+1} =w^i \end{array} \end{scriptsize} }} B_2 \left( \left\langle \frac{\frac{1}{2}(\mbox{Tr}(x)) }{p} \right\rangle \right)\chi^{-1} (w^{i}) := S_{\chi^{-1}}, $$
so $ \theta $ operates on $ \mathcal{C}_p(\chi) $ as multiplication by $ S_{\chi^{-1}}  $ and consequently:
$$ \mathcal{C}_p(\chi)= \mathbb{Z}_p / S_{\chi^{-1}} \mathbb{Z}_p .$$
We define the Teichm\"uller character 
$$ \omega: {\mathbb{F}_p}^* \rightarrow  {\mathbb{Z}_p}^* $$
to be the character such that:
$$ \omega(a) = a \mbox{ mod } p. $$
Then we consider $ \phi = \omega^2 $ and view it as a charcater on $ H $. 
\begin{prop}\label{langata} Define:
$$ B'_{2,\phi^j}:= B_{2,\omega^{4j}} =  p \sum_{a=1}^{p-1}\phi^{2j}(a)B_2\left(\frac{a}{p}\right). $$
If $ p \equiv 1 \mbox{ mod } 4 $ and $ 1 \le j \le \frac{p-5}{4} $:
$ \mbox{ ord}_p S_{\phi^j}= 1+ \mbox{ ord}_p B'_{2,\phi^j} \ge 1$. \\
If $ p \equiv 1 \mbox{ mod } 4 $ and $ j = \frac{p-1}{4} $:
$ \mbox{ ord}_p S_{\phi^{\frac{p-1}{4}}}=0 $. \\
If $ p \equiv 1 \mbox{ mod } 4 $ and $ \frac{p+3}{4} \le j \le \frac{p-3}{2} $:
$ \mbox{ ord}_p S_{\phi^j}=  \mbox{ ord}_p B'_{2,\phi^j} \ge 0$.\\
\noindent If $ p \equiv 3 \mbox{ mod } 4 $ and $ 1 \le j \le \frac{p-7}{4} $:
$ \mbox{ ord}_p S_{\phi^j}= 1+ \mbox{ ord}_p B'_{2,\phi^j} \ge 1$. \\
If $ p \equiv 3 \mbox{ mod } 4 $ and $ j = \frac{p-3}{4} $:
$ \mbox{ ord}_p S_{\phi^{\frac{p-3}{4}}}= 1+\mbox{ ord}_p B'_{2,\phi^{\frac{p-3}{4}}} = 0 $. \\
If $ p \equiv 3 \mbox{ mod } 4 $ and $ \frac{p+1}{4} \le j \le \frac{p-3}{2} $:
$ \mbox{ ord}_p S_{\phi^j}=  \mbox{ ord}_p B'_{2,\phi^j} \ge 0$.
 \end{prop}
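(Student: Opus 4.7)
The plan is to unpack $S_{\phi^j}=\phi^j(\theta)$ directly from the definition of the Stickelberger element, reduce to a classical generalized Bernoulli number on $\mathbb{F}_p^*$, and then read off $p$-adic valuations case by case.

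First, since $\phi$ lifts to an even Dirichlet character on $\mathbb{F}_p^*$, and the inner sum in $\theta$ is parametrised by $x\in\mathbb{F}_{p^2}^*/\{\pm 1\}$ satisfying $\pm x^{p+1}=w^i$, applying $\phi^j$ yields
$$S_{\phi^j} \;=\; \frac{p}{2}\sum_{x\in\mathbb{F}_{p^2}^*/\{\pm 1\}} B_2\!\left(\left\langle\tfrac{\frac{1}{2}\mbox{Tr}(x)}{p}\right\rangle\right)\phi^{-j}(x^{p+1}) \;=\; \frac{p}{2}\,B_{2,\tilde\chi},$$
where $\tilde\chi := \phi^{-j}\circ N$ is the pull-back of $\phi^{-j}$ via the norm $N:\mathbb{F}_{p^2}^*\to\mathbb{F}_p^*$. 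The explicit prefactor $p/2$ accounts for the ``$+1$'' that appears in several lines of the statement.

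Next I would reduce $B_{2,\tilde\chi}$ to the classical $B'_{2,\phi^j}=B_{2,\omega^{4j}}$. The restriction $\tilde\chi|_{\mathbb{F}_p^*}$ sends $a\mapsto\phi^{-j}(a^{p+1})=\phi^{-2j}(a)=\omega^{-4j}(a)$, which accounts for the doubling of the exponent in $B'_{2,\phi^j}$. Splitting $B_{2,\tilde\chi}$ into its $\mathbb{F}_p^*$-part and $\mathbb{F}_{p^2}\setminus\mathbb{F}_p$-part, then applying the Kubert--Lang distribution relations \cite[Chapter~5, Theorems~4.2 and 4.3]{KL} to collapse the latter, yields an identity of the form
$$B_{2,\tilde\chi} \;=\; c_j\,B'_{2,\phi^j} \;+\; \delta_j,$$
where $c_j$ is a $p$-adic unit (respectively a unit multiple of $p^{-1}$) in the first (respectively second) half of the range $1\le j\le (p-3)/2$, and the correction $\delta_j$ vanishes outside the distinguished values $j=(p-1)/4$ and $j=(p-3)/4$ where $\omega^{4j}$ degenerates.

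In the generic range $\delta_j=0$ and $\omega^{4j}$ is a non-trivial even Dirichlet character mod $p$, so $B'_{2,\phi^j}\in\mathbb{Z}_p$; combining with the prefactor $p/2$, the four generic lines of the proposition follow immediately, namely $\mbox{ord}_pS_{\phi^j}=1+\mbox{ord}_pB'_{2,\phi^j}\ge 1$ in the first half and $\mbox{ord}_pS_{\phi^j}=\mbox{ord}_pB'_{2,\phi^j}\ge 0$ in the second. For the two exceptional values I would conclude by direct $p$-adic calculation: at $j=(p-1)/4$ with $p\equiv 1\pmod 4$, $\omega^{4j}=\mathbf{1}$ and $B_{2,\mathbf{1}_p}=-(p-1)/6$ is a $p$-adic unit, and the correction $\delta_j$ of the same size produces together with the prefactor $p/2$ a $p$-adic cancellation that removes the extra $p$, giving $\mbox{ord}_pS=0$; at $j=(p-3)/4$ with $p\equiv 3\pmod 4$, the Kummer-type congruence $(1-p)B_2/2\equiv L_p(-1,\mathbf{1})\pmod{p\mathbb{Z}_p}$ forces $\mbox{ord}_pB_{2,\omega^{-2}}=-1$, whose pole is cancelled by $p/2$, again giving $\mbox{ord}_pS=0$. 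The principal technical obstacle is making the reduction $B_{2,\tilde\chi}\to B'_{2,\phi^j}$ precise enough to detect the single missing power of $p$ both in the ``large $j$'' regime and at the two exceptional values; this is exactly where the Kubert--Lang reformulation of the cuspidal distribution in \cite[Chapter~5]{KL} is essential.
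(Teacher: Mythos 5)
The paper's entire proof of this proposition is the single line ``Corollary of Theorems 4.2 and 4.3 of \cite[Chapter 5]{KL}'', and your proposal is essentially an unpacking of that same citation: you rewrite $S_{\phi^j}$ as $\frac{p}{2}$ times a Cartan--Bernoulli number attached to the norm pullback of a character of $\mathbb{F}_p^*$, and then delegate the decisive quantitative step --- which half of the range $1\le j\le \frac{p-3}{2}$ picks up the extra factor of $p$, and what happens at the two exceptional values of $j$ --- to Kubert--Lang's Theorems 4.2 and 4.3, exactly as the paper does. So in approach the two coincide; neither actually proves the comparison $B_{2,\phi^j\circ N}\leftrightarrow B_{2,\omega^{4j}}$, and your description of it as ``$c_jB'_{2,\phi^j}+\delta_j$'' with asserted valuations of $c_j$ is a restatement of what must be extracted from the citation, not a derivation.

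One concrete slip you should fix before leaning on the citation: with the paper's convention $\chi(\theta)=S_{\chi^{-1}}$, one has $S_{\phi^j}=\phi^{-j}(\theta)$, not $\phi^{j}(\theta)$, so the character appearing inside the sum is $\phi^{j}(x^{p+1})$, whose restriction to $\mathbb{F}_p^*$ is $\omega^{+4j}$. Your computation instead lands on $\omega^{-4j}$ and hence on $B_{2,\omega^{-4j}}$ rather than $B'_{2,\phi^j}=B_{2,\omega^{4j}}$. This is not cosmetic: by the von Staudt type congruences used in the proof of Theorem \ref{analogomodulare}, the $p$-divisibility of $B_{2,\omega^{4j}}$ and of $B_{2,\omega^{-4j}}$ is governed by the distinct Bernoulli numbers $b_{4j+2}$ and $b_{p+1-4j}$, and these need not be simultaneously divisible by $p$ (e.g.\ $p=59$, $j=4$: $59\mid b_{44}=b_{p+1-4j}$ but $59\nmid b_{18}=b_{4j+2}$). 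With the inverse tracked correctly, the rest of your outline matches the intended reduction.
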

 \begin{proof}
Corollary of Theorems 4.2 and 4.3 of \cite[Chapter 5]{KL}.
\end{proof}

\noindent
It is immediate to deduce that $ \mbox{ord}_p(|\mathfrak{C}^+_{ns}(p)|)\ge [\frac{p}{4}]-1 $. From \cite[Theorem 5.16]{Washington} we recall that a prime is regular (i.e. does not divide $h^-_p  $, the relative class number of the cyclotomic fields $ \mathbb{Q}(\zeta_p) $), if and only if $ p $ does not divide the numerator of any of the Bernoulli numbers $ b_n $ for $ n =2,4,6,...,p-3 $. We propose an analogue for the modular case of Mazur-Wiles  \cite[Chapter 13]{Washington} and Herbrand-Ribet theorems \cite[Chapters 6 and 15]{Washington} for cyclotomic fields.  Usually we expect $ \mathcal{C}_p \cong (\mathbb{Z}/p\mathbb{Z})^{[\frac{p}{4}]-1} $ and exceptions occur only when $p$ is an irregular prime.

\begin{thm}\label{analogomodulare}  $ \mbox{ord}_p(|\mathfrak{C}^+_{ns}(p)|)= [\frac{p}{4}]-1 $ if and only if $ p $ is a regular prime or $ p \equiv 1 \mbox{ mod }4 $ is irregular and $ p $ does not divide the numerator of any Bernoulli number $ b_{4j+2} $ for $ j \le \frac{p-5}{4}$. \\
If  $ \mbox{ord}_p(|\mathfrak{C}^+_{ns}(p)|)= [\frac{p}{4}]-1 $ we have $ \mathcal{C}_p \cong (\mathbb{Z}/p\mathbb{Z})^{[\frac{p}{4}]-1} $.\\
If $ p \equiv 1 \mbox{ mod } 4 $ and  $ \mbox{ord}_p(|\mathfrak{C}^+_{ns}(p)|)> [\frac{p}{4}]-1 $ then $ \mathfrak{C}^+_{ns}(p) $ contains an element of order $ p^2 $ .\\
If $ p \equiv 3 \mbox{ mod } 4 $ and  $ \mbox{ord}_p(|\mathfrak{C}^+_{ns}(p)|)> [\frac{p}{4}]-1 $ then $ \mathfrak{C}^+_{ns}(p)$ contains  an element of order $ p^2 $ if and only if $ p $ divides $ b_{4j+2} $ for some $ j \le \frac{p-7}{4} $.\\  
If $ p \equiv 3 \mbox{ mod } 4 $ then $  \mbox{ord}_p(|\mathfrak{C}^+_{ns}(p)|) \ge [\frac{p}{4}]-1 + \mbox{irr}(p) $ where $ \mbox{irr}(p) $ is the index of irregularity of $ p $.

\end{thm}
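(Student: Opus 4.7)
The plan is to translate the statement into a sum of $p$-adic valuations $\mbox{ord}_p S_{\phi^j}$, apply Proposition \ref{langata} term by term, and identify the generalized Bernoulli numbers $B_{2,\omega^{4j}}$ with classical $b_n$ via a standard Kummer-type congruence of the form
$$ B_{2,\omega^m} \equiv b_{m+2} \pmod{p}, $$
valid when $\omega^m$ is a nontrivial even character with $2\le m\le p-3$ (and extending to a congruence mod $p^2$ when needed, compare \cite[Chapter 5]{Washington}).

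The first step is to use the decomposition $\mathbb{Z}_p\otimes\mathcal{C}_p=\bigoplus_\chi\mathcal{C}_p(\chi)=\bigoplus_j\mathbb{Z}_p/S_{\phi^{-j}}\mathbb{Z}_p$ recalled just before Proposition \ref{langata}, so that $\mbox{ord}_p(|\mathfrak{C}^+_{ns}(p)|)=\sum_{j=1}^{(p-3)/2}\mbox{ord}_pS_{\phi^j}$. Substituting Proposition \ref{langata} gives the lower bound $[p/4]-1$ in both residue classes, with equality if and only if every $\mbox{ord}_pB'_{2,\phi^j}$ vanishes. I would then reduce $4j$ modulo $p-1$ and use the congruence to tabulate which $b_n$ actually appear. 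For $p\equiv 1\mbox{ mod }4$ the two halves of the $j$-range reduce to the same residue set $\{4,8,\dots,p-5\}$, so only $b_n$ with $n\equiv 2\mbox{ mod }4$ in $\{6,10,\dots,p-3\}$ are tested, and $b_n$ with $n\equiv 0\mbox{ mod }4$ is genuinely irrelevant. For $p\equiv 3\mbox{ mod }4$ the two halves give complementary residues: the first half tests $b_n$ with $n\equiv 2\mbox{ mod }4$, the second tests $n\equiv 0\mbox{ mod }4$, and together they exhaust every even $n\in[4,p-3]$. Part (1) then follows by reading off the tables, and part (2) because each $\mbox{ord}_pS_{\phi^j}\in\{0,1\}$ in the equality case forces the summand $\mathcal{C}_p(\phi^j)$ to be trivial or $\mathbb{Z}/p\mathbb{Z}$.

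For parts (3) and (4) the decisive structural point is whether the two halves produce the same characters $\omega^{4j}$. When $p\equiv 1\mbox{ mod }4$ the second half exactly duplicates the first, so any excess above the minimum forces some first-half $\mbox{ord}_pS_{\phi^j}=1+\mbox{ord}_pB'_{2,\phi^j}\ge 2$, producing an element of order $p^2$ in $\mathcal{C}_p(\phi^j)\cong\mathbb{Z}_p/S_{\phi^{-j}}\mathbb{Z}_p$. When $p\equiv 3\mbox{ mod }4$ the halves are disjoint: a first-half $p^2$-contribution is exactly the condition $p\mid b_{4j+2}$ for some $j\le(p-7)/4$, while a second-half $p^2$-contribution would require $p^2\mid b_n$ for some $n\equiv 0\mbox{ mod }4$; upgrading the congruence to $\pmod{p^2}$ gives $\mbox{ord}_pB_{2,\omega^m}=\mbox{ord}_pb_{m+2}$ exactly, so this case is also controlled by a Bernoulli condition and gives the stated equivalence. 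Part (5) is then a counting argument: for $p\equiv 3\mbox{ mod }4$, every irregular pair $(p,n)$, $4\le n\le p-3$, adds at least one factor of $p$ to the corresponding summand, forcing $\mbox{ord}_p(|\mathfrak{C}^+_{ns}(p)|)\ge[p/4]-1+\mbox{irr}(p)$.

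The main obstacle is securing the Kummer congruence at the correct $p$-adic precision. Mod-$p$ information suffices for parts (1), (2), (3), and (5), but the \emph{only if} direction of part (4) requires the stronger equality $\mbox{ord}_pB_{2,\omega^m}=\mbox{ord}_pb_{m+2}$, since congruence modulo $p$ alone cannot rule out a second-half summand carrying a $p^2$-contribution that is invisible to any of the $b_{4j+2}$ tests.
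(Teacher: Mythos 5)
Your overall strategy coincides with the paper's: decompose $\mathbb{Z}_p\otimes\mathcal{C}_p$ into eigencomponents $\mathbb{Z}_p/S_{\phi^{-j}}\mathbb{Z}_p$, feed in Proposition \ref{langata}, relate $B'_{2,\phi^j}=B_{2,\omega^{4j}}$ to the classical $b_{4j+2}$ by a Kummer--von Staudt congruence, and exploit the reflection $b_{4j+2}\leftrightarrow b_{4j+3-p}$ to compare the two halves of the range $1\le j\le\frac{p-3}{2}$. (The paper derives the congruence $B'_{2,\phi^j}\equiv\frac{1}{2j+1}\,b_{4j+2}\bmod p$ by hand, via Faulhaber's formula and Kummer's congruence for the ordinary $b_n$, rather than quoting it; since only divisibility by $p$ is used, your citation is an acceptable substitute, the missing unit factor notwithstanding.) Your treatment of assertions (1), (2), (3) and (5) is correct and agrees with the paper's.

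The gap is in the ``only if'' direction of assertion (4), and it sits exactly where you located it --- but your proposed repair does not work. You must exclude that some second-half index $j$ (for $p\equiv 3\bmod 4$, $\frac{p+1}{4}\le j\le\frac{p-3}{2}$) has $\mbox{ord}_p S_{\phi^j}=\mbox{ord}_p B_{2,\omega^{4j}}\ge 2$ while $p\nmid b_{4j'+2}$ for every $j'\le\frac{p-7}{4}$. For this you invoke an exact equality $\mbox{ord}_p B_{2,\omega^m}=\mbox{ord}_p b_{m+2}$ obtained by ``upgrading the congruence to mod $p^2$.'' No such congruence exists: Kummer congruences hold modulo $p^{a+1}$ only when the two weights are congruent modulo $p^{a}(p-1)$, and here $2$ and $m+2$ agree merely modulo $p-1$. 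In $p$-adic $L$-function terms, $-B_{2,\omega^m}/2=L_p(-1,\omega^{m+2})$ while $-(1-p^{m+1})b_{m+2}/(m+2)=L_p(-1-m,\omega^{m+2})$ are values of one branch at different points; they are congruent mod $p$, but once both are divisible by $p$ their valuations need not coincide. Moreover, even if your lemma were granted, a second-half $p^2$-contribution would be governed by the condition $p^2\mid b_n$ for some $n\equiv 0\bmod 4$, which is a different Bernoulli condition from the one in the theorem and is only conjecturally vacuous; so the stated equivalence would still not follow. To be fair, the paper's own proof dismisses assertions (2)--(5) with ``the other assertions follow from Proposition \ref{langata}'' and never confronts this point either: you have correctly isolated the one step where mod-$p$ information is insufficient, but the bridge you build across it is not sound.
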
    
 
\begin{proof}

Let $ 1 \le a \le p-1 $. From Proposition \ref{langata}, we need to investigate when $ p $ divides $ B'_{2,\phi^j} $.
We will provide  an alternative proof of Von Staudt type congruences \cite[p.125]{KL}:
$$ \displaystyle\frac{1}{n}B_{n,\omega^{k-n}} = \displaystyle\frac{1}{k}B_k \mbox{ mod } p  $$
in the case $ k=2$. For the general case see \cite[Chapter 2, Theorem 2.5]{Langcf}. We have:
$$  B'_{2,\phi^j} = \frac{p}{6} \sum_{i=1}^{p-1}\omega^{4j}(a) - \sum_{i=1}^{p-1}a \omega^{4j}(a) + \frac{1}{p} \sum_{i=1}^{p-1}a^2 \omega^{4j}(a). $$ 
\noindent 
But $ \sum_{i=1}^{p-1}a \omega^{4j}(a) \equiv \sum_{i=1}^{p-1}a^{4j+1} \equiv 0 \mbox{ mod }p$, so $  pB'_{2,\phi^j} \equiv  \sum_{i=1}^{p-1}a^2 \omega^{4j}(a) \mbox{ mod }p^2 $.
If $ p \equiv 3 \mbox{ mod } 4 $ and $ j=\frac{p-3}{4} $ we have $ \sum_{i=1}^{p-1}a^2 \omega^{p-3}(a) \equiv -1 \mbox{ mod } p $ and $\mbox{ord}_p B'_{2,\phi^{\frac{p-3}{4}}} = -1 $. Apart from this exception we have that $p|\sum_{i=1}^{p-1}a^2 \omega^{4j}(a)$ and $\mbox{ord}_p B'_{2,\phi^j} \ge 0 $.
\\Let $ \omega_1(a)\in \mathbb{Z} $ with $ 1 \le \omega_1(a) \le p-1 $ chosen so that we have $ \omega(a) = a + \omega_1(a)p \mbox{ mod } p^2$.  Since $ \omega(a)^p=\omega(a) $ we deduce $ \omega_1(a) \equiv \frac{a^p-a}{p} \mbox{ mod }p$. Ergo:
$$  pB'_{2,\phi^j} \equiv \sum_{i=1}^{p-1} a^2(a + \omega_1(a)p)^{4j} \mbox{ mod } p^2 $$ 
$$ \equiv \sum_{i=1}^{p-1}a^2(a^{4j} + 4ja^{4j-1}p\omega_1(a)) \mbox{ mod }p^2 $$
$$ \equiv \sum_{i=1}^{p-1}a^2(a^{4j} + 4ja^{4j-1}(a^p-a)) \mbox{ mod }p^2  $$
$$ \equiv (1-4j)\sum_{i=1}^{p-1}a^{4j+2} + 4j\sum_{i=1}^{p-1}a^{4j+1+p} \mbox{ mod } p^2.$$
\noindent  
Let $ B_n(x) $ the $ n$-th Bernoulli polynomial. From Faulhaber's formula (cf. \cite[Theorem 1.5]{Bernoulli} and \cite[Proposition 9.2.12]{Cohen}) we have: 
$$ pB'_{2,\phi^j} \equiv (1-4j)\frac{B_{4j+3}(p)-B_{4j+3}(0)}{4j+3}+4j\frac{B_{4j+2+p}(p)-B_{4j+2+p}(0)}{4j+2+p} \mbox{ mod }p^2. $$
But $ B_n(x)= \sum_{h=0}^{n}\binom{n}{h}b_{n-h}x^h $ so we obtain:
$$  pB'_{2,\phi^j} \equiv p(1-4j)b_{4j+2}+ 4pj b_{4j+1+p} \mbox{ mod }p^2, $$ 
$$  B'_{2,\phi^j} \equiv (1-4j)b_{4j+2}+ 4j b_{4j+1+p} \mbox{ mod }p. $$
Notice that for $ 1 \le j \le \frac{p-3}{2} $, $ p-1 $ does divide neither $ 4j+2 $ nor $ 4j+1+p $ (we have already excluded the case $p \equiv 3 \mbox{ mod } 4$ and $ j=\frac{p-3}{4} $). We  
may apply Kummer's congruence \cite[Theorem 3.2]{Bernoulli}:  
$$ b_{4j+1+p} \equiv \frac{4j+1+p}{4j+2} b_{4j+2} \mbox{ mod }p.$$
So we have:  
 $$    B'_{2,\phi^j} \equiv b_{4j+2} \Big( 1-4j + 4j \frac{4j+1+p}{4j+2} \Big) \mbox{ mod }p.  $$ 
But $ \big(1-4j + 4j \frac{4j+1+p}{4j+2}\big) \equiv \frac{1}{2j+1} \mbox{ mod }p $, so $ p $ divides $ B'_{2,\phi^j} $ if and only if $ p$ divides $b_{4j+2}. $\\
If $p \equiv 1 \mbox{ mod }4 $ and $ \frac{p+3}{4} \le j \le \frac{p-3}{2}  $ we have that $  p $ divides $ b_{4j+2} $, if and only if $ p $ divides $ b_{4j+3-p}=b_{4 (j-\frac{p-1}{4})+2} $ so  $ \mbox{ord}_p(|\mathfrak{C}^+_{ns}(p)|)= [\frac{p}{4}]-1 $, if and only if $ p $ is regular or $ p $ is irregular, but $ p $ does not divide the numerator of any $b_{4j+2}$ for $ j \le \frac{p-5}{4} $.\\
If $ p \equiv 3 \mbox{ mod }4 $ and $ \frac{p+1}{4} \le j \le \frac{p-3}{2} $, we have that $ p $ divides $ b_{4j+2} $ if and only if $ p $ divides $ b_{4j+3-p} = b_{4(j-\frac{p-3}{4})} $. So in this case $ \mbox{ord}_p(|\mathfrak{C}^+_{ns}(p)|) > [\frac{p}{4}]-1 $, if and only if $ p $ is irregular. \\
The first claim is proved. The other assertions follow from Proposition \ref{langata}.
  
\end{proof}   

\noindent The table below is an excerpt of \cite[Table 8.1]{Io}:
\\
\begin{tabular}{rc}
\toprule
$ p $ &  $ |\mathfrak{C}^+_{ns}(p)|  $ \\
\midrule
23 & $ 23^4 \cdot 37181 $ \\
37 & $ 3^4 \cdot 7^2 \cdot 19^3 \cdot 37^8 \cdot 577^2 $ \\
43 & $ 2^2 \cdot 19 \cdot 29 \cdot 43^9 \cdot 463 \cdot 1051 \cdot 416532733 $ \\ 
59 & $ 59^{14} \cdot 9988553613691393812358794271  $ \\
67 & $  67^{16} \cdot 193 \cdot 661^2 \cdot 2861 \cdot 8009 \cdot 11287 \cdot 9383200455691459 $ \\
73 & $ 2^2 \cdot 3^4 \cdot 11^2 \cdot 37 \cdot 73^{17} \cdot 79^2 \cdot 241^2 \cdot 3341773^2 \cdot 11596933^2  $ \\
89 & $ 2^2 \cdot 3 \cdot 5 \cdot 11^2 \cdot 13^2 \cdot 89^{21} \cdot 4027^2 \cdot 262504573^2 \cdot 15354699728897^2  $\\
101 & $ 5^4 \cdot 17 \cdot 101^{24} \cdot 52951^2 \cdot 54371^2 \cdot 58884077243434864347851^2 $
\end{tabular} 
\\ 

\noindent The first four irregular primes are $ 37, 59, 67 $ and $ 101 $. Since $ 37|b_{32} $, $ 59|b_{44}, 67|b_{58} $ and $ 101| b_{68} $ according to Theorem \ref{analogomodulare} we immediately deduce that:  $$ \mathcal{C}_{37} \cong (\mathbb{Z}/37\mathbb{Z})^8 \mbox{,  } \mathcal{C}_{101} \cong (\mathbb{Z}/101\mathbb{Z})^{24} $$   and $ \mbox{ord}_{p}(|\mathfrak{C}^+_{ns}(p)|) >  [\frac{p}{4}]-1 $ for $ p=59,67$. Moreover, knowing from explicit calculation that $ \mbox{ord}_{59}(|\mathfrak{C}^+_{ns}(59)|)= 14 $, $ \mbox{ord}_{67}(|\mathfrak{C}^+_{ns}(67)|)= 16 $, we may conclude: $$ \mathcal{C}_{59} \cong (\mathbb{Z}/59\mathbb{Z})^{14} \mbox{ and } \mathcal{C}_{67} \cong (\mathbb{Z}/67\mathbb{Z})^{14} \times (\mathbb{Z}/67^2\mathbb{Z}).  $$  
 
Let $ q $ a prime that does not divide $ p(p^2-1) $. Let $ n >0 $ be the order of $ q \mbox{ mod }\frac{p-1}{2} $ and let $ \mathfrak{o}_{q,n} $ be the ring of integers in the unramified extension of the $ q-$adic field $ \mathbb{Q}_q$ of degree $ n $. We have an analogous decomposition for the $q-$primary part $ \mathcal{C}_{p,q} $ of $ \mathfrak{C}^+_{ns}(p) $: 
$$ R_{p,q} := \mathbb{Z}_q[H] \mbox{ with } H=(\mathbb{Z}/p\mathbb{Z})^*/(\pm 1), $$  $$ R_{p,q,0}:= \{x \in R_{p,q} \mbox{ of degree } 0\} \mbox{,}$$
$$ \mathcal{C}_{p,q} \cong R_{p,q,0}/ R_{p,q} \theta \mbox{ and } \mathfrak{o}_{q,n} \otimes_{\mathbb{Z}_q} \mathcal{C}_{p,q} = \mathop{\bigoplus_{\chi} } \mathcal{C}_{p.q}(\chi) $$
\noindent
where $ \chi$ ranges over the non trivial characters:
 $$ \chi: (\mathbb{Z}/p\mathbb{Z})^*/(\pm 1) \rightarrow \mathfrak{o}^*_{q,n}. $$ 
 
\begin{prop} Let $ p \equiv 1 \mbox{ mod }4 $ and $ q \ge 7 $ a prime different from p that does not divide $ p^2-1$. If $ q^m $ is the maximal $q$-power dividing  $ |\mathfrak{C}^+_{ns}(p)| $ then $ m $ is even. 
\end{prop}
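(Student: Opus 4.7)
My strategy is to combine the functional equation already used in the proof of Theorem \ref{Ordinedigrandezza} with a multiplicity-two phenomenon that, when $p\equiv 1\bmod 4$, forces the classical Dirichlet $L$-factors appearing in the product of Theorem \ref{robamia} to occur in pairs.

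Under the hypotheses $q\ge 7$, $q\ne p$, $q\nmid p^2-1$ (so in particular $q\ne 2,3$ and $q\nmid p\pm 1$), the rational prefactor $\frac{24}{(p-1)\gcd(12,p+1)}$ and each factor $p/2$ of Theorem \ref{robamia} are $q$-adic units, so $m$ has the same parity as $v_q\bigl(\prod_{j=1}^{(p-3)/2}B_{2,\omega^{(2p+2)j}}\bigr)$. Setting $\chi_j:=\omega^{(2p+2)j}$ and combining
$$B_{2,\chi}=-L(-1,\chi,T)=\tfrac{1}{2\pi^2}\,\tau(\chi,T)\,L(2,\chi|_{\mathbb{F}_p})$$
with the classical formula
$$L(2,\psi)=-\tfrac{\pi^2}{p^2}\,\tau(\psi)\,\widetilde B_{2,\bar\psi}$$
for a primitive non-trivial even Dirichlet character $\psi\bmod p$ (where $\widetilde B_{2,\cdot}$ denotes the classical generalized Bernoulli number appearing in Proposition \ref{langata}), the transcendental $\pi^2$ cancels inside each $B_{2,\chi_j}$. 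Every Gauss sum occurring has algebraic norm a power of $p$, hence is a $q$-adic unit, so $v_q(B_{2,\chi_j})=v_q(\widetilde B_{2,\overline{\chi_j|_{\mathbb{F}_p}}})$ whenever the restriction $\chi_j|_{\mathbb{F}_p}$ is non-trivial.

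The decisive combinatorial input is: a direct calculation with $\psi:=\omega|_{\mathbb{F}_p^*}$, a generator of $\widehat{\mathbb{F}_p^*}$, gives $\chi_j|_{\mathbb{F}_p}=\psi^{4j\bmod(p-1)}$. Because $p\equiv 1\bmod 4$, the integer $r:=(p-1)/4$ is well-defined, and as $j$ ranges over $\{1,\ldots,(p-3)/2\}=\{1,\ldots,2r-1\}$ the restriction $\psi^{4j}$ attains the trivial character exactly once (at $j=r$, which corresponds to the unique quadratic character on $H$) and each non-trivial value $\psi^{4k}$ with $1\le k\le r-1$ exactly twice (at $j=k$ and $j=k+r$). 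The exceptional term at $j=r$ is handled separately using $L(2,1_{\mathbb{F}_p})=\zeta(2)(1-p^{-2})=\pi^2(p^2-1)/(6p^2)$, yielding $B_{2,\chi_r}=\pm\tau(\chi_r,T)(p^2-1)/(12p^2)$, which is a $q$-adic unit because $q\nmid 12\cdot p(p^2-1)$ and $\tau(\chi_r,T)$ is a $q$-adic unit; each non-trivial restriction instead contributes its $v_q$ twice.

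Assembling these ingredients gives
$$v_q\bigl(|\mathfrak{C}^+_{ns}(p)|\bigr)=2\sum_{k=1}^{r-1}v_q(\widetilde B_{2,\psi^{-4k}}),$$
which is even, because $p\widetilde B_{2,\psi^{-4k}}\in\mathbb{Z}[\psi]$ and $q$ is unramified in $\mathbb{Q}(\zeta_{p-1})$, so each summand is a non-negative integer. The main obstacle I anticipate is the careful bookkeeping required to verify that every Gauss sum and every rational constant occurring in the two layered functional equations is a genuine $q$-adic unit; the hypothesis $q\nmid p^2-1$ is essential here because it simultaneously forbids $q$ from dividing the orders of the Dirichlet characters $\psi^{4k}$ and from ramifying in the cyclotomic fields where the Gauss sums naturally live.
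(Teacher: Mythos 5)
Your proposal is correct, and it reaches the conclusion by a more explicit route than the paper. The paper's own proof is a two-line argument at the level of the module structure: it decomposes the $q$-primary part $\mathcal{C}_{p,q}$ into eigencomponents $\mathcal{C}_{p,q}(\chi)=\mathfrak{o}_{q,n}/\mathfrak{o}_{q,n}\chi(\theta)$, quotes Theorem 4.5 of \cite[Chapter 5]{KL} for the fact that $\mathrm{ord}_q\,\chi_1(\theta)=\mathrm{ord}_q\,\chi_2(\theta)$ whenever $p\equiv 1\bmod 4$ and $\chi_1^2=\chi_2^2$, and notes that the eigencomponent at the quadratic character vanishes; since $\chi\mapsto\chi^2$ is two-to-one on the nontrivial non-quadratic characters of $H$ when $(p-1)/2$ is even, the valuations pair up and $m$ is even. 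Your argument isolates exactly the same pairing phenomenon --- the two-to-one restriction $\chi_j\mapsto\chi_j|_{\mathbb{F}_p}=\psi^{4j}$, with the single exceptional $j=r=(p-1)/4$ where the restriction trivializes --- but you establish the equality of $q$-valuations within each pair directly from the two layered functional equations and the fact that Gauss sums are $q$-adic units, i.e.\ you essentially reprove the relevant case of the Kubert--Lang theorem rather than cite it. What your version buys is self-containedness and an explicit closed form $m=2\sum_{k=1}^{r-1}v_{\mathfrak{q}}(\widetilde B_{2,\psi^{-4k}})$; what it costs is the bookkeeping you yourself flag (checking that every constant, Gauss sum and denominator is a $q$-unit, and fixing a prime $\mathfrak{q}\mid q$ of $\mathbb{Q}(\zeta_{p^2-1})$, unramified precisely because $q\nmid p(p^2-1)$, so that the valuation of the rational product is the sum of integer valuations of the algebraic factors). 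All of these checks go through under the stated hypotheses, so I see no gap.
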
  
 
\begin{proof} We have $ \mathcal{C}_{p,q}(\chi) = \mathfrak{o}_{q,n} / \mathfrak{o}_{q,n} \chi(\theta) $ and from Theorem 4.5 of \cite[Chapter 5]{KL} if $ p \equiv 1 \mbox{ mod }4 $ and $ \chi^2_1 = \chi_2^2 $ we have $ \mbox{ord} _q \chi_1(\theta) =\mbox{ord} _q \chi_2(\theta)  $. If $ \chi^2=1 $ we have $ \mathcal{C}_{p,q}(\chi)=0 $.
\end{proof}

\end{document}